\documentclass[12pt,twoside,a4paper]{article}
\usepackage{graphicx}
\usepackage[english]{babel}
\usepackage{amsmath}
\usepackage{amsthm}
\usepackage{amscd}
\usepackage{pb-diagram}
\usepackage{comment}
\usepackage{amssymb}
\usepackage{subcaption}
\usepackage{color}
\usepackage{enumitem}
\usepackage{faktor}
\usepackage[normalem]{ulem}  

\usepackage{amsmath, amssymb, amsthm}  
\usepackage{mathtools}  
\usepackage{bm}  
\usepackage{bbm}  
\usepackage{physics}  

\usepackage{tikz}
\usetikzlibrary{arrows, automata, positioning}

\usepackage{graphicx}  
\usepackage{float}  

\usepackage{geometry}  
\geometry{a4paper, margin=1in}
\usepackage{times}  
\usepackage{setspace}  

\usepackage{hyperref}
\hypersetup{
    colorlinks=true,
    linkcolor=blue,
    citecolor=blue,
    urlcolor=blue
}

\theoremstyle{definition}
\newtheorem{definition}{Definition}
\theoremstyle{plain}
\newtheorem{theorem}{Theorem}
\newtheorem{example}{Example}

\newtheorem{lemma}{Lemma}

\theoremstyle{remark}
\newtheorem{remark}{Remark}


\usepackage{authblk}

\usepackage{algorithm}
\usepackage{algorithmicx}
\usepackage{algpseudocode}

\newcommand{\mo}{\operatorname{mo}}
\newcommand{\cl}{\operatorname{cl}}





\newcommand{\img}{\operatorname{im}}

\newcommand{\esol}{\operatorname{eSol}}

\def\mathobj#1{\mbox{$#1$}}

\def\ZZ{\mathobj{\mathbb{Z}}}

\newcommand{\dom}{\operatorname{dom}}
\newcommand{\Inv}{\operatorname{Inv}}

\def\cV{\text{$\mathcal V$}}


\def\articletheorems{
\newtheorem{thm}{Theorem}[section]

\newtheorem{prop}[thm]{Proposition}


}
\articletheorems

\renewenvironment{proof}{{\bf Proof:\ }}{\qedsymbol}

\begin{document}

\title{Persistent homology of Morse decomposition in Markov chains based on combinatorial multivector fields}
 \author{Donald Woukeng}
\affil{Division of Computational Mathematics, Faculty of Mathematics and Computer
Science, Jagiellonian University, ul. St. Lojasiewicza 6, Krakow, 30-348, Poland, donald.woukeng@aims.ac.rw}
\date{}
\maketitle
 

\begin{abstract}
    In this paper, we introduce a novel persistence framework for Morse decompositions in Markov chains using combinatorial multivector fields. Our approach provides a structured method to analyze recurrence and stability in finite-state stochastic processes. In our setting filtrations are governed by transition probabilities rather than spatial distances. We construct multivector fields directly from Markov transition matrices, treating states and transitions as elements of a directed graph. By applying Morse decomposition to the induced multivector field, we obtain a hierarchical structure of invariant sets that evolve under changes in transition probabilities. This structure naturally defines a persistence diagram, where each Morse set is indexed by its topological and dynamical complexity via homology and Conley index dimensions.

\end{abstract}
\section{Introduction}
\subsection{Motivation}
The study of dynamics has seen a growing interaction between topological methods and discrete combinatorial approaches, especially in understanding recurrence and stability in finite-state models. One of the key frameworks in this domain is Morse decomposition, which provides a hierarchical structure for understanding long-term system behavior. Persistent homology has been instrumental in tracking topological features across scales, but its direct application to Morse decompositions has remained a challenge.

Several works have addressed the persistence of topological invariant for dynamics data or dynamical systems, it is the case for example of  \cite{dey2019persistent,dey2020persistence,dey2022persistence} where the study of persistence is done for Morse decomposition, Conley index and Conley-Morse graph. However, a key missing component in these works is a stability theorem that ensures that Morse decompositions remain robust under small perturbations in the system's transition dynamics. In particular, no prior work has established a formal stability result for Morse set persistence in Markov chains. Our work addresses this gap by providing a rigorous foundation for the stability of Morse decompositions in combinatorial multivector fields constructed from Markov chains.
Markov chains are widely used to model stochastic processes in various real-world domains, including biological networks , financial markets, climate systems, and reinforcement learning. In these applications, understanding long-term behavior and recurrent structures is crucial for predicting trends, identifying stable states, and classifying different system dynamics. Some traditional methods for analyzing Markov chains focus on stationary distributions, spectral properties, and absorbing states \cite{fernandez2025quasi,ben2024representation,han2024classification}, but they often lack a topological and hierarchical perspective on the structure of state transitions. Some recent work have been done for the analysis of Markov chains using classical persistence \cite{le2022persistent,tymochko2021classifying,donato2013decimation}, but they do not take into account the potential dynamics hidden in the Markov chains.

Our method introduces a new classification approach for Markov chains by applying persistent homology to Morse decompositions, allowing us to track how recurrent structures change as transition probabilities vary. By constructing persistence diagrams indexed by homology and Conley indices, we provide a robust way to classify Markov chains based on their intrinsic recurrence structures rather than just numerical transition probabilities. 
\subsection{Overview of the result}
In this paper, we develop a persistence framework for Morse decompositions in Markov chains using combinatorial multivector fields. Our main contributions is as follows:
\begin{theorem}[Stability of Morse Set Persistence Diagrams]
Let \( P \) and \( P' \) be two transition matrices of a Markov process with n states such that:

\[
\| P - P' \|_{\infty} < \delta.
\]

Then, the bottleneck distance between their Morse set persistence diagrams satisfies:

\begin{equation}
d_B(D(P), D(P')) < C \delta,
\end{equation}

where \( C \) depends only on the structure of the Markov process.
\end{theorem} 
 \section{Preliminaries}

\subsection{Relations and Posets}

Let $X$ be a set. A \emph{binary relation} on $X$ is defined as a subset $R \subset X \times X$. We use the standard notation $xRy$ to indicate that $(x,y) \in R$.

A relation $\leq$ that is reflexive, antisymmetric, and transitive is called a \emph{partial order}, and the pair $(X, \leq)$ is known as a \emph{poset}. A subset $A \subset X$ of a poset $(X, \leq)$ is called an \emph{upper set} if $\{ z \in X  \mid \exists_{x\in A} x \leq z \} \subset A$. Similarly, $A \subset X$ is a \emph{lower set} if $\{ z \in X  \mid \exists_{x\in A} z \leq x \} \subset A$.

A subset $A\subset X$ is said to be \emph{convex} in a poset $(X, \leq)$ if for every $x,z\in A$ and $y\in X$ satisfying $x \leq y \leq z$, it follows that $y \in A$. Equivalently, a convex set is the intersection of a lower set and an upper set.

A relation $R$ is called an \emph{equivalence relation} if it satisfies reflexivity, symmetry, and transitivity. The \emph{equivalence class} of an element $x$ under $R$ is given by $[x]_{R}:=\{y \in X \mid x R y\}$.

A \emph{partition} of $X$ is a collection $\mathcal{V}$ of non-empty subsets of $X$ such that for every distinct $A, B \in \mathcal{V}$, we have $A \cap B = \emptyset$ and $\bigcup \mathcal{V} = X$. A partition $\mathcal{V}$ induces an equivalence relation $R$ defined by $xRy$ if there exists some $A \in \mathcal{V}$ such that $x, y \in A$. We denote the equivalence class of a point $x \in X$ under partition $\mathcal{V}$ as $[x]_{\mathcal{V}}$.

\subsection{Topological Spaces}

Given a topology $\mathcal{T}$ on $X$, the pair $(X, \mathcal{T})$ is referred to as a \emph{topological space}. When the topology $\mathcal{T}$ is understood, we may simply refer to $X$ as a topological space. The interior, closure, and boundary of a subset $A \subset X$ with respect to $\mathcal{T}$ are denoted by $\operatorname{int}_{\mathcal{T}} A$, $\operatorname{cl}_{\mathcal{T}} A$, and $\operatorname{bd}_{\mathcal{T}} A$, respectively. The \emph{mouth} of $A$ is defined as $\operatorname{mo}_{\mathcal{T}} A := \operatorname{cl}_{\mathcal{T}} A \setminus A$. When the topology is clear from context, we may use the shorthand $\operatorname{cl}_X A$ or omit the subscript altogether.

For a subset $Y \subset X$, the induced topology on $Y$ is given by $\mathcal{T}_Y := \{U \cap Y \mid U \in \mathcal{T}\}$. The closure of $A \subset Y$ with respect to $\mathcal{T}_Y$ is denoted as $\operatorname{cl}_{\mathcal{T}_Y} A$ or simply $\operatorname{cl}_Y A$. The same applies to the interior, boundary, and mouth.

A subset $A \subset X$ is called \emph{locally closed} if every point $x \in A$ has a neighborhood $U$ in $X$ such that $A \cap U$ is closed in $U$.

\begin{prop}\cite[Problem 2.7.1]{En1989}. 
A subset $A$ of a topological space $X$ satisfies the following equivalent conditions:
\begin{enumerate}[label=(\roman*)]
    \item $A$ is locally closed,
    \item The mouth $\operatorname{mo} A = \operatorname{cl} A \setminus A$ is closed,
    \item $A$ can be expressed as the difference of two closed sets in $X$,
    \item $A$ is the intersection of an open set and a closed set in $X$.
\end{enumerate}
Moreover, the finite intersection of locally closed sets remains locally closed.
\end{prop}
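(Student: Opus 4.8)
The plan is to prove the four conditions equivalent by closing a single cycle of implications, after first observing that (iii) and (iv) are interchangeable on purely set-theoretic grounds. Indeed, if $A = F_1 \setminus F_2$ with $F_1, F_2$ closed, then $A = F_1 \cap (X \setminus F_2)$ exhibits $A$ as the intersection of a closed set and an open set; conversely, if $A = U \cap C$ with $U$ open and $C$ closed, then $A = C \setminus (X \setminus U)$ is a difference of two closed sets. No argument beyond taking complements is needed here, so it remains to close the loop (iv) $\Rightarrow$ (ii) $\Rightarrow$ (i) $\Rightarrow$ (iv).

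For (iv) $\Rightarrow$ (ii) I would start from $A = U \cap C$ and note $\operatorname{cl} A \subseteq C$, since $C$ is closed. The key observation is that inside $\operatorname{cl} A$ membership in $A$ is detected by $U$ alone, so that $\operatorname{mo} A = \operatorname{cl} A \setminus A = \operatorname{cl} A \setminus U = \operatorname{cl} A \cap (X \setminus U)$, an intersection of two closed sets and hence closed. For (ii) $\Rightarrow$ (i), assuming $\operatorname{mo} A$ closed, I would produce a single open set $W := X \setminus \operatorname{mo} A$ containing every point of $A$; a short computation gives $\operatorname{cl} A \cap W = \operatorname{cl} A \setminus \operatorname{mo} A = A$, so that $A = \operatorname{cl} A \cap W$ is closed in $W$, and $W$ witnesses local closedness simultaneously at all points of $A$.

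The step I expect to be the main obstacle is (i) $\Rightarrow$ (iv), where a pointwise hypothesis must be globalized while keeping track of closure in the ambient space versus in a subspace. For each $x \in A$ choose an open $U_x$ with $A \cap U_x$ closed in $U_x$, and set $U := \bigcup_{x \in A} U_x$, an open set containing $A$. The claim is $A = \operatorname{cl} A \cap U$, the inclusion $\subseteq$ being immediate. For $\supseteq$, take $y \in \operatorname{cl} A \cap U$, so $y \in U_x$ for some $x$; the delicate point is that since $U_x$ is an open neighborhood of $y$ meeting $A$ precisely along $A \cap U_x$, one verifies $y \in \operatorname{cl}_X (A \cap U_x)$, whence $y \in \operatorname{cl}_X (A \cap U_x) \cap U_x = \operatorname{cl}_{U_x}(A \cap U_x) = A \cap U_x \subseteq A$, using that $A \cap U_x$ is closed in $U_x$. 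This gives $A = \operatorname{cl} A \cap U$, which is exactly (iv).

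Finally, closure under finite intersections follows at once from (iv): writing each locally closed $A_i = U_i \cap C_i$ with $U_i$ open and $C_i$ closed, the intersection $\bigcap_i A_i = (\bigcap_i U_i) \cap (\bigcap_i C_i)$ is again the intersection of an open set with a closed set, since finite intersections preserve both openness and closedness.
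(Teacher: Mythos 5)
Your proof is correct in all four implications and in the closure-under-finite-intersection claim; I checked in particular the two places where subtlety could enter, namely the identity $\operatorname{mo} A = \operatorname{cl} A \setminus U$ in (iv)~$\Rightarrow$~(ii) (which uses $\operatorname{cl} A \subseteq C$ and $\operatorname{cl} A \cap U \subseteq U \cap C = A$) and the globalization step in (i)~$\Rightarrow$~(iv), where the passage $y \in \operatorname{cl}_X A \Rightarrow y \in \operatorname{cl}_X(A \cap U_x)$ for $y \in U_x$ is justified because any neighborhood of $y$ can be intersected with the open set $U_x$ before testing against $A$. The paper itself gives no proof of this proposition --- it is quoted from Engelking with a citation --- so there is no argument to compare against; your cycle $(iv) \Rightarrow (ii) \Rightarrow (i) \Rightarrow (iv)$ together with the set-theoretic equivalence of (iii) and (iv) is a clean, self-contained way to establish it, and as a small bonus your (ii)~$\Rightarrow$~(i) step already exhibits $A = \operatorname{cl} A \cap (X \setminus \operatorname{mo} A)$, which would let you shortcut directly to (iv) without the harder pointwise-to-global argument if you only needed the equivalence rather than each individual implication.
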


A topological space is called a $T_0$ space if for every pair of distinct points in $X$, at least one of them has a neighborhood that does not contain the other. We are particularly interested in finite $T_0$ spaces, which, by Alexandroff’s theorem \cite{Alexandroff_ftop}, can be associated with posets.

\begin{thm}\label{thm:finite_top} \cite{Alexandroff_ftop}
For a finite poset $(P, \leq)$, the collection of all upper sets of $\leq$ forms a $T_0$ topology $\mathcal{T}_{\leq}$ on $P$. Conversely, for any finite $T_0$ space $(X, \mathcal{T})$, defining $x \leq_{\mathcal{T}} y$ by $x \in \operatorname{cl}_{\mathcal{T}} \{ y \}$ results in a partial order on $X$. These two correspondences are mutually inverse.
\end{thm}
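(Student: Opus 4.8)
The plan is to prove the three assertions in turn — that upper sets topologize $P$, that the specialization relation orders $X$, and that the two passages invert one another — and to isolate where the finiteness hypothesis is actually needed.

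First I would verify that the upper sets of a finite poset $(P,\leq)$ form a topology. Both $\emptyset$ and $P$ are upper, and a direct check shows that arbitrary unions and arbitrary intersections of upper sets are again upper: if $z$ dominates some element of a union (respectively, of an intersection) then it dominates an element lying in one member (respectively, in every member), and the upper-set property of that member forces $z$ into it. For the $T_0$ axiom I would note that the principal up-set $\uparrow x := \{z \mid x \leq z\}$ is the smallest open set containing $x$. Given distinct $x,y$, antisymmetry forbids $x \leq y$ and $y \leq x$ simultaneously, so say $x \not\leq y$; then $\uparrow x$ is an open neighbourhood of $x$ omitting $y$.

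Next I would show the specialization relation $x \leq_{\mathcal{T}} y :\Leftrightarrow x \in \operatorname{cl}_{\mathcal{T}}\{y\}$ is a partial order on any $T_0$ space $(X,\mathcal{T})$. Reflexivity is immediate from $x \in \operatorname{cl}_{\mathcal{T}}\{x\}$. For transitivity, $y \in \operatorname{cl}_{\mathcal{T}}\{z\}$ gives $\operatorname{cl}_{\mathcal{T}}\{y\} \subset \operatorname{cl}_{\mathcal{T}}\{z\}$ by monotonicity and idempotence of closure, so $x \in \operatorname{cl}_{\mathcal{T}}\{y\}$ yields $x \in \operatorname{cl}_{\mathcal{T}}\{z\}$. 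Antisymmetry is precisely where $T_0$ enters: recalling that $p \in \operatorname{cl}_{\mathcal{T}}\{q\}$ means every open neighbourhood of $p$ contains $q$, if $x \neq y$ then some open set separates them, contradicting one of the two relations $x \in \operatorname{cl}_{\mathcal{T}}\{y\}$ and $y \in \operatorname{cl}_{\mathcal{T}}\{x\}$.

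Finally I would show the assignments are mutually inverse. Starting from a poset, the closed sets of $\mathcal{T}_{\leq}$ are exactly the lower sets, so $\operatorname{cl}_{\mathcal{T}_{\leq}}\{y\}$ is the principal down-set $\downarrow y = \{x \mid x \leq y\}$; hence $x \leq_{\mathcal{T}_{\leq}} y \Leftrightarrow x \leq y$, recovering $\leq$. Starting from a finite $T_0$ space, the key identity is that the minimal open neighbourhood $U_x := \bigcap\{V \in \mathcal{T} \mid x \in V\}$, a finite and hence open intersection, equals $\{z \mid x \leq_{\mathcal{T}} z\}$. Granting this, every $\mathcal{T}$-open $U$ is upper, since $x \in U$ and $x \leq_{\mathcal{T}} z$ force the open neighbourhood $U$ of $x$ to contain $z$; conversely every upper $U$ satisfies $U = \bigcup_{x \in U} U_x$, a union of open sets, and so is open, giving $\mathcal{T}_{\leq_{\mathcal{T}}} = \mathcal{T}$. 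I expect this last equivalence to be the genuine obstacle: it is the only step where finiteness is indispensable, resting on the existence of the minimal neighbourhoods $U_x$, which can fail for infinite $T_0$ spaces.
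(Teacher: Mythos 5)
The paper offers no proof of this theorem at all: it is quoted as a classical result with a citation to Alexandroff, so there is no internal argument to compare yours against. Your proof is correct and complete, and it is the standard one. All three claims are handled soundly: the union/intersection checks and the use of antisymmetry plus principal up-sets $\uparrow x$ for the $T_0$ axiom; the reflexivity/transitivity/antisymmetry verification for the specialization relation, with $T_0$ invoked exactly where it is needed; and the two round-trip identities. The pivot of the second round trip --- that the minimal open neighbourhood $U_x = \bigcap\{V \in \mathcal{T} \mid x \in V\}$ is open and coincides with $\{z \mid x \leq_{\mathcal{T}} z\}$, so that the $\mathcal{T}$-open sets are precisely the $\leq_{\mathcal{T}}$-upper sets --- is the right key lemma, and you correctly isolate finiteness as the hypothesis that makes $U_x$ open; this is exactly what fails for infinite $T_0$ spaces that are not Alexandroff, where the specialization order is still a partial order but the upper-set topology can be strictly finer than $\mathcal{T}$.
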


This equivalence allows all topological properties of a finite $T_0$ space to be expressed in terms of posets and vice versa. In particular, given a finite $T_0$ space $(X, \mathcal{T})$ with an associated poset $(X, \leq)$, the closure and boundary of $A \subset X$ can be described as:
\begin{align}
    \operatorname{cl}_{\mathcal{T}} A &= \{x \in X \mid \exists_{a \in A} x \leq a\}, \label{eq:closure_poset}\\
    \operatorname{bd}_{\mathcal{T}} A &= \{x \in X \mid \exists_{a \in A, b \in X \setminus A} x \leq a \text{ and } x \leq b\}. \label{eq:boundary_poset}
\end{align}

In the context of finite topological spaces, the notions of local closedness and convexity coincide \cite[Proposition 1.4.10]{lipinski_phd}. Thus, a subset is locally closed in a $T_0$ topology if and only if it is convex in the corresponding poset structure. Throughout this paper, we use these terms interchangeably, favoring “locally closed” in topological discussions and “convex” in combinatorial or algorithmic contexts.


\subsection{Combinatorial multivector fields}

 All of the definitions in this subsection can be found in \cite{lipinski2019conley}.

Let X be a finite topological space. A \emph{combinatorial multivector} or briefly a \emph{multivector} is a
locally closed subset $V \subset X$. 
A \emph{combinatorial multivector field} on $X$, or briefly a \emph{multivector field}, is a partition $\mathcal{V}$ of $X$ into multivectors.

Since $\cV$ is a partition, we can denote by $[x]_{\mathcal{V}}$ the unique multivector in $\cV$ that contains $x\in X$. 
If the multivector field $\cV$ is clear from the context, we write briefly $[x]$.
We say that a multivector $V\in\cV$ is \emph{critical} if the relative singular homology $H(\cl V, \mo V )$ is non-trivial.
A multivector $V$ which is not critical is called \emph{regular}. 
We say that a set $A \subset X$ is \emph{$\cV$-compatible} if for every $x \in X$ either $[x] \cap A = \emptyset$ or $[x] \subset A$.

Multivector field $\mathcal{V}$ on $X$ induces a  multivalued map $\Pi_{\mathcal{V}} : X \multimap X$ given by 
\begin{align}\label{eq:piv}
    \Pi_{\mathcal{V}}(x)=[x]_{\mathcal{V}}\cup \cl x~.
\end{align}

We consider a combinatorial dynamical system given by the iterates of~$\Pi_\cV$.

A \emph{solution} of a combinatorial dynamical system $\Pi_\cV:X\multimap X$ in $A\subset X$ is a partial map $\varphi:\mathbb{Z}\nrightarrow A$ whose domain, denoted $\dom \varphi$,  is a $\mathbb{Z}$-interval and for any $i,i+1\in \dom \varphi$ the inclusion $\varphi(i+1)\in \Pi_\cV(\varphi(i))$ holds. Let us denote by $\textup{Sol}(A)$ the set of all solutions $\varphi$ such that $\img\varphi\subset A$. $\textup{Sol}(X)$ is the set of all solution of $\Pi_\cV$.
If $\dom\varphi$ is a bounded interval then we say that $\varphi$ is a \emph{path}.
If $\dom\varphi=\ZZ$ then $\varphi$ is a \emph{full solution}. 

A full solution $\varphi : \mathbb{Z} \rightarrow X$ is \emph{left-essential} (respectively \emph{right-essential})
if for every regular $x \in \img\varphi$ the set $\{ t \in \mathbb{Z}\mid \varphi(t) \notin [x]_{\mathcal{V}} \}$ is left-infinite (respectively right-infinite). 
We say that $\varphi$ is \emph{essential} if it is both left- and right-essential.
The collection of all essential solutions $\varphi$ such that $\img\varphi\subset A$ is denoted by $\textup{eSol}(A)$.

The \emph{invariant} part of a set $A\subset X$ is 
$\Inv A := \bigcup \{\img \varphi\mid \varphi\in\esol(A)\}$.
In particular, if $\textup{Inv} A = A$ we say that $A$ is an \emph{invariant set} for a multivector field $\mathcal{V}$.

A closed set $N\subset X$ \emph{isolates} invariant set $S \subset N$ if the following conditions are satisfied:
\begin{enumerate}[label=(\roman*)]
    \item every path in $N$ with endpoints in $S$ is a path in $S$,
    \item $\Pi_{\mathcal{V}}(S) \subset N$.
\end{enumerate}
In this case, $N$ is an \emph{isolating set} for $S$. 
If an invariant set $S$ admits an isolating set then we say that $S$ is an \emph{isolated invariant set}.
The \emph{homological Conley index} of an isolated invariant set $S$ is defined as $\operatorname{Con}(S):=H(\operatorname{cl} S,\mo S)$.

Let $A \subset X$.
By $\bigl\langle A \bigl\rangle_{\mathcal{V}}$ we denote the intersection of 
all locally closed and $\cV$-compatible sets in $X$ containing $A$.
We call this set the $\mathcal{V}$-\emph{hull} of $A$. 
The combinatorial $\alpha$-\emph{limit set} and $\omega$-\emph{limit set} for a full solution $\varphi$ are defined as
\begin{align*}
    & \alpha(\varphi) := \Bigl\langle \bigcap\limits_{t \in \mathbb{Z}^-}\varphi((-\infty,t]) \Bigl\rangle_\mathcal{V}\ , \\
    & \omega (\varphi) := \Bigl\langle \bigcap\limits_{t \in \mathbb{Z}^+}\varphi([t, \infty))  \Bigl\rangle_\mathcal{V}\ .
\end{align*}

Let $S\subset X$ be a $\cV$-compatible, invariant set.
Then, a finite collection $\mathcal{M}=\{M_p\subset S\mid p\in\mathbb{P}\}$ is called a \emph{Morse decomposition} of $S$ if there exists a finite poset $(\mathbb{P},\le)$ such that the following conditions are satisfied:
\begin{enumerate}[label=(\roman*)]
    \item $\mathcal{M}$ is a family of mutually disjoint, isolated invariant subsets of $S$,
    \item for every $\varphi\in\esol(S)$ either $\img\varphi \subset M_r$  for an $r \in \mathbb{P}$
or there exist $p, q \in \mathbb{P}$ such that $q > p$,    $\alpha(\varphi)\subset M_q \text{, and } \omega(\varphi)\subset M_p$.
 \end{enumerate}
We refer to the elements of $\mathcal{M}$ as \emph{Morse sets}.

Let \( \cV \) and \( \cV' \) be two multivector fields on the same underlying space. We say that \( \cV \) is a \textbf{coarsening} of \( \cV' \) (see \cite{dey2022tracking}, denoted \( \cV' \preceq \cV \), if every multivector in \( \cV \) is a union of one or more multivectors in \( \cV' \). Formally, 
\[
\forall~ V \in \cV, \quad \exists \{W_i\} \subseteq \cV' \text{ such that } V = \bigcup_i W_i.
\]
Coarsening corresponds to merging finer multivectors into larger ones, effectively reducing the resolution of the multivector field.
\subsection{Markov Chains}
The defintions on this subsection can be found in \cite[Chapter 2]{tolver2016introduction}
\subsubsection{Definition of a Markov Chain}

A discrete-time Markov chain is a stochastic process \(\{X(n)\}_{n\in \mathbb{N}_0}\) taking values in a finite or countable state space \(S\). The process satisfies the Markov property, meaning that the probability of transitioning to the next state depends only on the present state and not on the past history:

\begin{equation}
    P(X(n+1) = j \mid X(n) = i, X(n-1) = i_{n-1}, \dots, X(0) = i_0) = P_{i,j},
\end{equation}

for all \(i, j, i_{n-1}, \dots, i_0 \in S\) and \(n \in \mathbb{N}_0\).

A Markov chain is called \textit{time-homogeneous} if the transition probabilities do not depend on \(n\), i.e.,

\begin{equation}
    P(X(n+1) = j \mid X(n) = i) = P_{i,j}.
\end{equation}

The dynamics of a discrete-time Markov chain on a finite state space is fully characterized by its \textit{transition matrix} \(P = (P_{i,j})\), where each entry \(P_{i,j}\) represents the probability of transitioning from state \(i\) to state \(j\):

\begin{equation}
    P = 
    \begin{bmatrix}
        P_{1,1} & P_{1,2} & \dots & P_{1,N} \\
        P_{2,1} & P_{2,2} & \dots & P_{2,N} \\
        \vdots & \vdots & \ddots & \vdots \\
        P_{N,1} & P_{N,2} & \dots & P_{N,N}
    \end{bmatrix}.
\end{equation}

Each row of \(P\) sums to 1:

\begin{equation}
    \sum_{j \in S} P_{i,j} = 1, \quad \forall i \in S.
\end{equation}

The probability of the system being in a specific state at step \(n\) is given by the vector \(\mathbf{p}(n)\), which evolves according to the equation:

\begin{equation}
    \mathbf{p}(n) = \mathbf{p}(0) P^n.
\end{equation}

where \(\mathbf{p}(0)\) is the initial probability distribution over states.

\subsubsection{Graph Representation of a Markov Chain}

A Markov chain can be represented as a \textit{directed graph}, called a \textit{transition diagram}. In this representation:

- The nodes (vertices) correspond to the states of the Markov chain.
- A directed edge \(i \to j\) exists if \(P_{i,j} > 0\), meaning there is a positive probability of transitioning from state \(i\) to state \(j\).
- The edges are labeled by the corresponding transition probabilities \(P_{i,j}\).

For example, the transition diagram of a three-state Markov chain with the transition matrix:

\begin{equation}
    P = 
    \begin{bmatrix}
        1/3 & 1/3 & 1/3 \\
        1/2 & 1/2 & 0 \\
        1 & 0 & 0
    \end{bmatrix}
\end{equation}

is represented as:

\begin{center}
\begin{tikzpicture}
    \node (1) at (0,0) {1};
    \node (2) at (3,1) {2};
    \node (3) at (3,-1) {3};

    \draw[->] (1) to[out=30,in=150] node[above] {\(\frac{1}{3}\)} (2);
    \draw[->] (1) to[out=-30,in=210] node[below] {\(\frac{1}{3}\)} (3);
    \draw[->] (1) to[loop left] node[left] {\(\frac{1}{3}\)} (1);

    \draw[->] (2) to[out=-45,in=45] node[right] {\(\frac{1}{2}\)} (1);
    \draw[->] (2) to[loop right] node[right] {\(\frac{1}{2}\)} (2);

    \draw[->] (3) to[out=150,in=-150] node[left] {1} (1);
\end{tikzpicture}
\end{center}

where the directed edges indicate possible transitions and their associated probabilities.
 
\section{Morse decomposition for Markov chains}
We introduce the construction of \textit{multivector fields} from Markov chains, explore the notion of \textit{coarsening}, and establish the \textit{Morse decomposition} of a multivector field. These concepts provide a structured way to analyze the dynamics of state transitions within a Markov process and extract meaningful topological features from the induced flow structure.

\subsection{Multivector field construction}
A \textit{multivector field} is constructed by merging \textit{vertices and edges} based on transition probabilities:
\begin{itemize}
    \item Each \textit{state} in the Markov chain is represented as a \textit{vertex}.
    \item Each \textit{directed edge} represents a transition between states.
    \item \textit{Edges and vertices are merged} into \textit{multivectors} if the transition probability is \textit{less than or equal a chosen threshold} \( \gamma \).
    \item Any element (vertex or edge) \textit{that is not merged} remains a \textit{singleton multivector}.
\end{itemize}

This construction ensures that \textit{all vertices and edges are included} in the multivector field, forming a \textit{directed partition of the state space and the edges that have non-zero transition probability}. The process is \textit{iterative}, merging multivectors when they \textit{overlap}, ensuring a well-defined decomposition.

A formal \textit{algorithm} is presented here, defining the precise steps to construct the multivector field. We will follow a method similar to the one use in \cite{Woukeng_2024, cote2025data}.

\begin{algorithm}
\caption{Construction of the Multivector Field from a Markov Chain}
\begin{algorithmic}[1]
\Require Transition matrix \( P \), threshold \( \gamma \)
\Ensure Multivector field \( V \)

\State Initialize \( V \gets \emptyset \) \Comment{Start with an empty set of multivectors}
\State Define \( S \) as the set of all states (nodes) in the Markov chain
\State Define \( E \) as the set of all directed edges \( (N_i, N_j) \) where \( P(N_i \to N_j) > 0 \)

\Comment{Step 1: Initialize all vertices and edges as separate multivectors}
\ForAll{ \( N_i \in S \) }
    \State \( V \gets V \cup \{ \{N_i\} \} \)
\EndFor
\ForAll{ \( (N_i, N_j) \in E \) with \( i < j \) }
    \State \( V \gets V \cup \{ \{(N_i, N_j)\} \} \) \Comment{Avoid duplicate bidirectional edges}
\EndFor

\Comment{Step 2: Merge vertices with edges if transition probability is below \( \gamma \)}
\ForAll{ \( (N_i, N_j) \in E \) with \( i < j \) }
    \If{ \( P(N_i \to N_j) \le \gamma \)}
        \State Merge \( N_i \) and the edge \( (N_i, N_j) \) into one multivector
        \State \( V \gets V \setminus \{ \{N_i\}, \{(N_i, N_j)\} \} \cup \{ \{N_i, (N_i, N_j)\} \} \)
    \EndIf
    \If{\( P(N_j \to N_i) \le \gamma \) }
        \State Merge \( N_j \) and the edge \( (N_i, N_j) \) into one multivector
        \State \( V \gets V \setminus \{\{N_j\}, \{(N_i, N_j)\} \} \cup \{ \{ N_j, (N_i, N_j)\} \} \)
    \EndIf
\EndFor

\Comment{Step 3: Merge overlapping multivectors to maintain the partition}
\ForAll{ pairs \( V_k, V_m \in V \) }
    \If{ \( V_k \cap V_m \neq \emptyset \) }
        \State Merge \( V_k \) and \( V_m \) into a single multivector
        \State \( V \gets V \setminus \{ V_k, V_m \} \cup \{ V_k \cup V_m \} \)
    \EndIf
\EndFor

\State \Return \( V \) \Comment{Return the Multivector field V}
\end{algorithmic}
\end{algorithm}

\begin{theorem}
Let \( P \) be the transition matrix of a finite Markov process with state space \( S \), and let \( \cV\gamma \) be the multivector field constructed using the threshold \( \gamma \). Then:

\begin{itemize}

    \item   The algorithm always produces a valid multivector field \( \cV\gamma \), where each state and transition appears exactly once.
    
    \item   The sequence of multivector fields \( \cV{\gamma_1}, \cV{\gamma_2}, \dots \) forms a decreasing filtration:
    \[
    \cV{\gamma_1} \preceq \cV{\gamma_2} \preceq \dots
    \]
    for \( \gamma_1 < \gamma_2 \), meaning that as \( \gamma \) increases.
    .
\end{itemize}
\end{theorem}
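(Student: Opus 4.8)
The plan is to realize the algorithm's output as the partition into connected components of an auxiliary merge graph, which lets me dispatch both assertions at once. First I would pin down the topology on the underlying space $X = S \sqcup E$, where $S$ is the set of states and $E$ the set of processed transitions. Under the cellular reading implicit in the construction, each state is a minimal element and each edge $(N_i,N_j)$ covers exactly its endpoints, so the poset associated to $X$ by Theorem~\ref{thm:finite_top} has only two levels. The key structural payoff is that every subset of a two-level poset is convex: in a chain $x \le y \le z$ the middle term cannot be strictly sandwiched, since edges are maximal and states minimal, so $y \in \{x,z\}$ always. By the equivalence between local closedness and convexity in finite $T_0$ spaces recalled in the preliminaries, every block the algorithm can possibly output is therefore locally closed, so local closedness needs no separate check --- this is the only point where one might otherwise worry, since unions of locally closed sets are not locally closed in general.

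For the validity claim I would verify that the returned collection is a genuine partition. Steps 1--2 only ever fuse a state with an incident edge, so the running collection always covers $X$ and never discards an element; the sole way disjointness can fail is an edge $(N_i,N_j)$ being drawn into separate blocks through $N_i$ and through $N_j$, and this is exactly the overlap that Step 3 removes by repeatedly merging intersecting blocks. The resulting blocks are nonempty, cover $X$, and are pairwise disjoint, hence form a multivector field in which every state and transition lies in exactly one block. To organize this, I would define the merge graph $H_\gamma$ on vertex set $X$, with a link between $N_i$ and $(N_i,N_j)$ whenever $P(N_i \to N_j) \le \gamma$ and between $N_j$ and $(N_i,N_j)$ whenever $P(N_j \to N_i) \le \gamma$. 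The central lemma is that the blocks of $\cV_\gamma$ are precisely the connected components of $H_\gamma$: Step 2 inserts exactly the links of $H_\gamma$, and the transitive fusion of Step 3 collapses each component to one block.

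Granting this lemma, the filtration statement is immediate. Fix $\gamma_1 < \gamma_2$. Since each defining inequality $P \le \gamma$ persists as $\gamma$ grows, every link of $H_{\gamma_1}$ is a link of $H_{\gamma_2}$, so $H_{\gamma_1}$ is a subgraph of $H_{\gamma_2}$; adding links can only merge components, never split them. Hence each component of $H_{\gamma_2}$ is a disjoint union of components of $H_{\gamma_1}$, which, read back through the lemma, says every multivector of $\cV_{\gamma_2}$ is a union of multivectors of $\cV_{\gamma_1}$ --- exactly the coarsening $\cV_{\gamma_1} \preceq \cV_{\gamma_2}$, and thus the claimed decreasing filtration.

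The main obstacle I anticipate is the central lemma, and specifically showing that the order-dependent iteration of Step 3 terminates at the component partition independently of the order in which overlapping pairs are chosen. I would settle this with an invariant: after any number of Step 3 operations, two elements share a block if and only if they are joined by a chain of Step-2 links, i.e. lie in the same component of $H_\gamma$. Confluence of the merging then follows because connectivity is an intrinsic property of $H_\gamma$, not of the processing order.
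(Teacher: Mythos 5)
Your proposal is correct, and it follows the same overall strategy as the paper's proof (the output is a partition of the vertex--edge space, local closedness is automatic in this setting, and the filtration comes from monotonicity of the merging condition in $\gamma$). The difference is one of rigor rather than of route: where the paper simply asserts that ``since we only deal with edges and vertices, we are sure to always have the local closedness property'' and that the filtration ``follows naturally from the construction,'' you supply the two missing arguments. Your observation that the face poset of a graph has height two, so that \emph{every} subset is convex and hence locally closed, is exactly the justification the paper omits; and your merge graph $H_\gamma$, whose connected components are the blocks of $\cV_\gamma$, is a genuine auxiliary device absent from the paper that simultaneously proves the partition property, the order-independence (confluence) of Step~3, and the coarsening $\cV_{\gamma_1}\preceq\cV_{\gamma_2}$ via the subgraph inclusion $H_{\gamma_1}\subset H_{\gamma_2}$. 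One small inaccuracy in your prose: disjointness after Step~2 can also fail at a \emph{state} (a vertex $N_i$ incident to two low-probability transitions ends up in both $\{N_i,e_1\}$ and $\{N_i,e_2\}$), not only at an edge reached from both endpoints; this does not affect your argument, since the component lemma handles overlaps of either kind, but the sentence claiming the edge case is ``the sole way'' disjointness can fail should be corrected.
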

 
\begin{proof}
    \begin{itemize}
        \item We consider our finite space being the state space of our Markov process plus edges between states, if the probability of going from one state to another is non-zero. We will denote that space X.
        \[
        X=\bigcup\{N_i\}\cup\{(N_i,N_j)\}\quad \text{if}\quad P(N_i\to N_j)>0~or~P(N_j\to N_i)>0,~i<j
        \]
        In Algorithm 1, from line 4 to 9 by adding all the elements of our space $X$, vertices representing states, and edges representing connection between them with non-zero transition probability. From line 10 to 19, we merge edges and vertices of our space according to the fix given threshold $\gamma$, and the last part from line 20 to 24, we ensure that what we created is  partition, by merging together elements that have non empty intersection with other elements. We then have a partition of our space $X$, and since we only deal with edges and vertices, we are sure to always have the local closedness property. Since V returned at the line 26 is a partition of our space X, into locally closed subset, V is then a multivector field.

        \item The second part of the theorem follows naturally from the construction of the multivector field, since it is done by merging edge to vertices for a certain threshold. If we have a bigger threshold, it is only natural that the multivectors will contain at least all the element contains, for a lower threshold, and maybe we will have multivector that are just union of multivectors in a for a lower threshold. 
    \end{itemize}
\end{proof}

\begin{example}
Consider a Markov process with three states \( S = \{N_1, N_2, N_3\} \) and the transition matrix:

\[
P =
\begin{bmatrix}
0.5 & 0.17 & 0.33 \\
0.17 & 0.6 & 0.23 \\
0.15 & 0.15 & 0.7
\end{bmatrix}.
\]

We choose a threshold \( \gamma = 0.2 \) for merging.

\begin{enumerate}
    \item \textbf{Step 1: Initialize Multivectors}\\
    Each state starts as its own singleton:  
    \[
    V1 = \{N_1\}, \quad V2 = \{N_2\}, \quad V3 = \{N_3\},
    \]\[
    V_4=\{(N_1,N_2)\},\quad V_5=\{(N_1,N_3)\}, \quad V_6=\{(N_2,N_3)\}.
    \]
    \item \textbf{Step 2: Merge Based on Transition Probabilities}\\
    - Since \( P_{12} = 0.17 \le \gamma \), we merge $V_4$ into \( V_1 \).
     Since \( P_{31} = 0.15 \le \gamma \), we merge $V_5$ into \( V_3 \).
    Since \( P_{32} = 0.15 \le \gamma \), we merge \(V_6 \) into \( V_3 \).  - Since \( P_{21} = 0.17 \le \gamma \), we merge \(V_4 \) into \( V_2 \).

    The new candidates are:
    \[
    V_1^* = \{N_1, (N_1, N_2)\}, \quad V_2^* = \{N_2, (N_1,N_2)\}, \quad V_3^* = \{N_3, (N_2, N_3), (N_1,N_3)\}.
    \]

    \item \textbf{Step 3: Partition and Merge Overlapping Multivectors}\\
    Since \( (N_1,N_2) \) appears in \( V_1^* \) and \( V_2^* \), we merge them:
    \[
    V_3^* = \{N_3, (N_1, N_3), (N_2, N_3)\}, \quad V_1^* = \{N_1,N_2, (N_1,N_2)\}.
    \]

    \item \textbf{Step 4: Final Multivector Field}\\
    The final set of multivectors is:
    \[
    \cV = \{V_1^*, V_3^*\}.
    \]
\end{enumerate}
\end{example}

\subsection{Morse Decomposition in Multivector Fields}
 Once the multivector field is constructed, we analyze its \textit{global structure} through Morse decompositions. A \textit{Morse set} is defined as a \textit{strongly connected component} (see\cite{lipinski2019conley}) in the \textit{graph induced by the multivector field (M-graph)}. These Morse sets represent \textit{regions of recurrence and stability} in the system.

\begin{itemize}
    \item The Morse sets are computed from the \textit{M-graph}, where \textit{each multivector is a node}, and \textit{edges exist based on the mouth condition} (a multivector \( M \) transitions to another \( M' \) if \( M' \) contains an element in \( M \)'s mouth).
    \item The resulting Morse decomposition provides a \textit{hierarchical structure} of stability regions.
    \item Morse sets can be \textit{persistent} under coarsening, allowing us to track their evolution through different scales.
\end{itemize}

A key property of this approach is that Morse decompositions naturally arise from the multivector field and do not require predefined distance metrics, making them applicable to discrete-state systems such as Markov chains.

A Morse set \( M \) persists as long as it remains an SCC in \( G_{\gamma} \). 

\begin{remark}
    [Persistence of Morse Sets]
A Morse set \( M \) has a \textbf{birth threshold} \( \gamma_b \) where it first appears and a \textbf{death threshold} \( \gamma_d \), where:
\begin{enumerate}
    \item \( M \) \textbf{merges} into another Morse set at \( \gamma_d \).
    \item The \textbf{index} of \( M \) (given by the dimension of  its \textbf{Conley index} or its homology) \textbf{changes} at \( \gamma_d \).
\end{enumerate}
Thus, the \textbf{lifespan} of a Morse set is defined as:
\[
\text{Persistence}(M) = \gamma_d - \gamma_b.
\]
\end{remark}

\begin{remark}
The change in Conley index and homology signals a \textbf{topological transition} in the Morse set structure. This can indicate a loss or gain of significant recurrent dynamics in the Markov process.
\end{remark}
 
\begin{theorem} 
Let \( \cV{\gamma_1} \) and \( \cV{\gamma_2} \) be two multivector fields such that \( \gamma_1 < \gamma_2 \). Then
for any Morse set $M_1 \in \mathcal{M}(\cV{\gamma_1})$, there exists a Morse set $M_2 \in \mathcal{M}(\cV{\gamma_2})$ such that:
    \[
     M_1\subset M_2 .
    \]
    
\end{theorem}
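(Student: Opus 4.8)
The plan is to exploit the coarsening relation $\cV_{\gamma_1}\preceq\cV_{\gamma_2}$ established in the previous theorem: since $\gamma_1<\gamma_2$, every multivector of $\cV_{\gamma_2}$ is a union of multivectors of $\cV_{\gamma_1}$. This induces a well-defined surjective \emph{contraction map} $\pi:\cV_{\gamma_1}\to\cV_{\gamma_2}$ sending each finer multivector $W$ to the unique coarser multivector $\pi(W)\supseteq W$ containing it. Because a Morse set is by definition a strongly connected component (SCC) of the M-graph, and as a subset of $X$ it is the union of the multivectors in that SCC, the statement $M_1\subset M_2$ will follow once I show that $\pi(\{W : W\subseteq M_1\})$ lies inside a single SCC of $G_{\gamma_2}$. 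The whole proof therefore reduces to a graph-theoretic fact: contracting vertices of a directed graph can only merge or enlarge SCCs, never split them, provided the contraction sends edges to edges or to identifications.

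The technical heart is verifying that $\pi$ is edge-compatible from $G_{\gamma_1}$ to $G_{\gamma_2}$. Recall that an edge $W\to W'$ in $G_{\gamma_1}$ means $W'\cap\mo W\neq\emptyset$. First I would record the elementary identity $\cl(\bigcup_i W_i)=\bigcup_i\cl W_i$ for finite unions, so that $\mo(\pi(W))=\cl(\pi(W))\setminus\pi(W)$ collects exactly the mouth-elements of the merged pieces that are not themselves absorbed into $\pi(W)$. Now take an edge $W\to W'$ with $\pi(W)\neq\pi(W')$ and pick $x\in W'\cap\mo W$. Then $x\in\cl W\subseteq\cl(\pi(W))$ and $x\in W'\subseteq\pi(W')$; since the multivectors of $\cV_{\gamma_2}$ are disjoint and $\pi(W')\neq\pi(W)$, we have $x\notin\pi(W)$, hence $x\in\mo(\pi(W))$ and $x\in\pi(W')$. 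Thus $\pi(W')\cap\mo(\pi(W))\neq\emptyset$, giving the edge $\pi(W)\to\pi(W')$ in $G_{\gamma_2}$. Edges internal to a fibre of $\pi$ simply disappear into a single node, which is harmless for SCC purposes.

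With edge-compatibility in hand, I would conclude as follows. Let $M_1$ correspond to an SCC $\{W_1,\dots,W_m\}$ of $G_{\gamma_1}$. For any two $W_a,W_b$ there are directed paths both ways in $G_{\gamma_1}$; applying $\pi$ edge-by-edge turns each such path into a directed walk in $G_{\gamma_2}$ (consecutive nodes either advance along an edge or coincide when an edge was internal). Hence $\pi(W_a)$ and $\pi(W_b)$ are mutually reachable in $G_{\gamma_2}$, so all of $\pi(W_1),\dots,\pi(W_m)$ lie in one SCC of $G_{\gamma_2}$; call the corresponding Morse set $M_2$. Finally, translating back to subsets of $X$: $M_1=\bigcup_{k}W_k$ and each $W_k\subseteq\pi(W_k)\subseteq M_2$, so $M_1\subseteq M_2$, as required.

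The main obstacle I anticipate is precisely the edge-compatibility step, because the mouth operator is not monotone under unions: merging multivectors can both destroy some mouth-elements (those swallowed into the new multivector) and leave the rest intact. The argument above shows these are exactly the two harmless cases---an internal absorption or a surviving cross-edge---but making it airtight requires the disjointness of the partition and the distributivity of closure over finite unions, together with care that the informal ``mouth condition'' defining $G_\gamma$ in the text is interpreted consistently (as $W'\cap\mo W\neq\emptyset$) at both thresholds. A secondary point worth checking is that $\pi$ really is single-valued, which is exactly the content of the coarsening relation $\cV_{\gamma_1}\preceq\cV_{\gamma_2}$ that we are entitled to assume.
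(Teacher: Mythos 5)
Your proposal is correct, and its overall strategy is the same as the paper's: coarsening of the multivector field can only merge strongly connected components of the M-graph, never split them, so each Morse set at $\gamma_1$ lands inside a single Morse set at $\gamma_2$. The difference is in how the two graphs are related. The paper asserts that $G_{\gamma_1}$ is a \emph{subgraph} of $G_{\gamma_2}$, which is not literally meaningful since the two graphs have different vertex sets (finer versus coarser multivectors); you instead build the contraction map $\pi:\cV_{\gamma_1}\to\cV_{\gamma_2}$ given by the coarsening relation and prove edge-compatibility explicitly: for an edge $W\to W'$ with $\pi(W)\neq\pi(W')$, a point $x\in W'\cap\mo W$ satisfies $x\in\cl W\subseteq\cl(\pi(W))$ and $x\in\pi(W')$, and disjointness of the partition $\cV_{\gamma_2}$ forces $x\notin\pi(W)$, hence $x\in\mo(\pi(W))\cap\pi(W')$. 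This is exactly the verification the paper's one-line argument leaves implicit, and your remark that edges internal to a fibre of $\pi$ collapse harmlessly is the right way to handle the non-monotonicity of the mouth operator under unions. In short, you prove a sharper statement (the M-graph at $\gamma_2$ receives a quotient of the M-graph at $\gamma_1$) from which the paper's conclusion follows cleanly; your version is the one I would keep.
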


\begin{proof}
Since \( \gamma_1 < \gamma_2 \), the construction of \( \cV{\gamma_2} \) involves additional merging compared to \( \cV{\gamma_1} \). From that we are sure that $G_{\gamma_1}$ is actually a subgraph of $G_{\gamma_2}$ and then Any strongly connected component (SCC) in \( G_{\gamma_1} \) remains connected in \( G_{\gamma_2} \) or merges into a larger SCC.
   
\end{proof}\\

Thus, the Morse sets at \( \gamma_2 \) is a union of Morse sets at \( \gamma_1 \), establishing the filtration property.

\begin{remark}
This result shows that Morse sets \textbf{never split} as \( \gamma \) increases. Instead, they follow a strict merging hierarchy, revealing the most \textbf{persistent structures} in the system.
\end{remark}

\section{Stability of Persistence diagrams for Morse decomposition}

A fundamental property of any meaningful topological or combinatorial structure in applied mathematics is \textit{stability}: small perturbations in the input data should not lead to large variations in the extracted features. In the context of multivector fields constructed from Markov chains, stability ensures that Morse decompositions remain robust under perturbations in the transition probabilities.

The goal here is to establish a formal stability result for Morse set persistence in multivector fields. Specifically, we consider how small perturbations in the transition matrix of a Markov process affect the birth and death times of Morse sets in the associated persistence diagram. To quantify this effect, we introduce an appropriate bottleneck distance that measures the similarity between Morse set persistence diagrams obtained from different Markov chains. But before that we need to define the persistence diagram in our setting since it is a bit different from the natural one (see \cite{cohen2005stability}).

\subsection{Persistence Diagram for Morse Sets}

\begin{definition}
Let \( \mathcal{M} = \{M_i\} \) be the set of Morse sets obtained from the \textbf{M-graph} of a multivector field at different values of \( \gamma \). Each Morse set \( M_i \) is characterized by:
\begin{itemize}
    \item A \textbf{birth threshold} \( \gamma_b \), where it first appears.
    \item A \textbf{death threshold} \( \gamma_d \), where it either merges with another Morse set or undergoes an \textbf{index change}.
    \item A \textbf{dimensional topological index} \( k_i \), defined as the pair:
\end{itemize}

\begin{equation}
k_i = \left( \dim H_{1}(\overline{M_i}), \dim \text{Ind}_{1}(M_i) \right),
\end{equation}
where:
\begin{itemize}
    \item \( \dim H_{1}(\overline{M_i}) \) is the \textbf{dimension of the first homology group} of the closure \( \overline{M_i} \) of the Morse set.
    \item \( \dim \text{Ind}_{1}(M_i) \) is the \textbf{dimension of the first relative homology group of the closure of $M_i$ to its mouth}.
\end{itemize}

The \textbf{persistence diagram} consists of a set of points \( (\gamma_b, \gamma_d, k_i) \) in the \textbf{birth-death plane}, where the index \( k_i \) encodes the \textbf{topological complexity} of the Morse set across different thresholds.
\begin{remark}
    $k_i$ is not really a part of the coordinate for the persistence of our Morse set $M_i$ in our persistence diagram. It just give you informations about the nature if the Morse set $M_i$.
\end{remark}

\end{definition}

\subsection{Bottleneck Distance for Morse Set Persistence Diagrams}

To compare the persistence diagrams of Morse sets obtained from different Markov chains, we define a bottleneck distance that captures differences in their birth and death times, ensuring that the measure remains independent of any fixed threshold \( \gamma \).

\subsubsection{Preliminaries and Notation}

Let \( P \) and \( P' \) be two transition matrices representing two Markov chains on the same state space:

\[
P = (p_{ij})_{1 \leq i,j \leq n}, \quad P' = (p'_{ij})_{1 \leq i,j \leq n}.
\]

Each Markov chain induces a multivector field \( \cV{\gamma}(P) \) and \( \cV{\gamma}(P') \) at each threshold \( \gamma \), leading to the construction of an M-graph and the computation of Morse sets. The persistence diagram tracks the birth and death of each Morse set as \( \gamma \) varies.

Let:
\begin{itemize}
    \item \( D(P) = \{(\gamma_b^i, \gamma_d^i, k_i)\} \) be the persistence diagram of Morse sets obtained from \( P \).
    \item \( D(P') = \{(\gamma_b'^j, \gamma_d'^j, k_j')\} \) be the persistence diagram obtained from \( P' \).
\end{itemize}

Each point in \( D(P) \) and \( D(P') \) represents a Morse set \( M_i \) with:
\begin{itemize}
    \item \textbf{Birth time} \( \gamma_b^i \): the smallest threshold \( \gamma \) at which \( M_i \) appears.
    \item \textbf{Death time} \( \gamma_d^i \): the largest threshold \( \gamma \) at which \( M_i \) exists before merging or disappearing.
    \item \textbf{Topological index} \( k_i = (\dim H_{1}(\overline{M_i}), \dim \text{Ind}_{1}(M_i)) \), encoding the first homology and Conley index dimensions.
\end{itemize}

\subsubsection{Definition of the Bottleneck Distance}

The \textbf{bottleneck distance} between \( D(P) \) and \( D(P') \) is given by:

\begin{equation}
d_B(D(P), D(P')) = \inf_{\varphi} \sup_{i} \|\mathbf{p}_i - \mathbf{p}'_{\varphi(i)}\|_{\infty},
\end{equation}

where:
\begin{itemize}
    \item \( \varphi \) is a bijection matching points only if \( k_i = k_{\varphi(i)}' \).
    \item \( \mathbf{p}_i = (\gamma_b^i, \gamma_d^i) \) and \( \mathbf{p}'_{\varphi(i)} = (\gamma_b'^{\varphi(i)}, \gamma_d'^{\varphi(i)}) \) are the birth-death coordinates of matched Morse sets.
    \item The distance between matched pairs is computed in the \( \ell_\infty \) norm:
    \begin{equation}
    \|\mathbf{p}_i - \mathbf{p}'_{\varphi(i)}\|_{\infty} = \max \left( |\gamma_b^i - \gamma_b'^{\varphi(i)}|, |\gamma_d^i - \gamma_d'^{\varphi(i)}| \right).
    \end{equation}
    \item If a Morse set has no valid match in the other persistence diagram, it is assigned to the diagonal \( (\gamma_b = \gamma_d) \), representing a zero-persistence feature.
\end{itemize}

Unlike classical persistence in topological data analysis, where stability is typically analyzed with respect to an underlying metric space, our setting is purely combinatorial: the Markov chain defines a weighted graph, and persistence is extracted through a filtration on the multivector field induced by transition probabilities.

\subsubsection{The Stability Theorem}

\begin{theorem}[Stability of Morse Set Persistence Diagrams]
Let \( P \) and \( P' \) be two transition matrices of a Markov process with n states such that:

\[
\| P - P' \|_{\infty} < \delta.
\]

Then, the bottleneck distance between their Morse set persistence diagrams satisfies:

\begin{equation}
d_B(D(P), D(P')) < C \delta,
\end{equation}

where \( C \) depends only on the structure of the Markov process.
\end{theorem} 

Before proving the theorem, we establish a key intermediate result. Since a global perturbation \( \delta \) in \( P \) may affect multiple transitions simultaneously, we first consider the effect of a single-entry perturbation in \( P \), which will allow us to extend the result to the full transition matrix.

\begin{lemma}[Local Stability Under Single Transition Change]
Let \( P \) and \( P' \) be two transition matrices that differ at only one entry \( (i,j) \):

\[
p'_{ij} = p_{ij} + \delta, \quad \text{where} \quad |\delta| < \epsilon, \quad \text{and} \quad p_{kl} = p'_{kl} \text{ for all } (k,l) \neq (i,j).
\]

Then, the bottleneck distance between their Morse set persistence diagrams satisfies:

\begin{equation}
d_B(D(P), D(P')) < \epsilon.
\end{equation}
\end{lemma}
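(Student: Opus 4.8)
The plan is to exploit the fact that the multivector-field construction (Algorithm~1) reads the transition matrix only through the finitely many comparisons $p_{kl}\le\gamma$, one for each directed edge $(N_k,N_l)$ with $p_{kl}>0$. Consequently the filtration $\gamma\mapsto\mathcal{V}_\gamma(P)$ is piecewise constant in $\gamma$ and changes only at the \emph{critical values} $\{p_{kl}\}$; between two consecutive critical values the M-graph, and hence the entire Morse decomposition together with every homology and Conley index, is constant. In particular every birth threshold $\gamma_b$ and death threshold $\gamma_d$ occurring in $D(P)$ is one of these critical values.

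First I would compare the two filtrations. Since $P$ and $P'$ agree on every entry except $(i,j)$, the merge condition for every edge other than $(N_i,N_j)$ is identical under the two matrices, while the edge $(N_i,N_j)$ is absorbed at threshold $p_{ij}$ under $P$ and at $p_{ij}+\delta$ under $P'$. Hence for every $\gamma$ lying outside the interval $I:=[\min(p_{ij},p_{ij}+\delta),\max(p_{ij},p_{ij}+\delta)]$, whose length is $|\delta|$, the two multivector fields coincide, $\mathcal{V}_\gamma(P)=\mathcal{V}_\gamma(P')$, so their M-graphs, strongly connected components, and associated indices agree exactly. The only possible discrepancy between the two persistence diagrams is therefore carried by those birth/death events equal to the single critical value $p_{ij}$, each of which is displaced by exactly $\delta$ when one passes from $P$ to $P'$.

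Next I would build the matching $\varphi$ realising the bottleneck distance. To each Morse set of $D(P)$ I assign the Morse set of $D(P')$ with the same combinatorial identity, namely the same constituent vertices and edges playing the same structural role in the SCC decomposition; because the construction differs only in the threshold at which the edge $(N_i,N_j)$ enters, this correspondence is a well-defined bijection, and since matched Morse sets have identical closures and mouths they share the same index $k_i=k_{\varphi(i)}'$, so the matching is admissible. For a matched pair, each coordinate of the birth--death point is either unchanged (when the corresponding critical value is some $p_{kl}\neq p_{ij}$) or shifted by at most $|\delta|$ (when it equals $p_{ij}$), so $\|\mathbf{p}_i-\mathbf{p}'_{\varphi(i)}\|_{\infty}\le|\delta|$. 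Taking the supremum over $i$ and the infimum over admissible matchings yields $d_B(D(P),D(P'))\le|\delta|<\epsilon$.

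The main obstacle is the well-definedness of this bijection, together with the index-matching condition, when the perturbation is large enough to push $p_{ij}$ across another critical value $p_{kl}$: such a crossing reorders the merge events and could, a priori, alter the intermediate combinatorial type of a Morse set along its lifespan. I expect to resolve this either by restricting to $\epsilon$ small enough that $p_{ij}+\delta$ crosses no other critical value, so that the two filtrations are combinatorially identical up to the single reparametrisation $p_{ij}\mapsto p_{ij}+\delta$, or, more robustly, by arguing that a Morse set's lifespan interval and its topological index depend only on the local ordering of the critical values around it, which a single-coordinate shift perturbs only within the interval $I$. In either case every matched birth--death coordinate moves by at most $|\delta|$, which preserves the bound and completes the argument.
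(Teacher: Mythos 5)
Your proposal follows essentially the same route as the paper's proof: fix a matching between Morse sets of $D(P)$ and $D(P')$ that pairs combinatorially identical sets, observe that only birth/death events tied to the perturbed entry can move, bound the displacement by $|\delta|$, and conclude via the definition of the bottleneck distance. In fact your version is more careful than the paper's: the paper simply asserts that ``each affected Morse set undergoes a shift in birth or death time by at most $|\delta|$,'' whereas you supply the justification the paper omits, namely that the algorithm reads $P$ only through the comparisons $p_{kl}\le\gamma$, so the filtration is piecewise constant with critical values $\{p_{kl}\}$ and the two filtrations literally coincide for all $\gamma$ outside the interval $I$ of length $|\delta|$. The obstacle you flag --- $p_{ij}+\delta$ crossing another critical value $p_{mn}$ and reordering the merge events --- is real and is not acknowledged in the paper at all; your second, ``more robust'' resolution is the right one and can be closed cleanly: any birth/death coordinate that changes must change from one value in $\operatorname{cl} I$ to another (since the filtrations agree outside $I$), so it moves by at most $|\delta|$, and any Morse set existing under one matrix with no admissible partner under the other has its entire lifespan contained in $I$ and hence lies within $|\delta|/2$ of the diagonal. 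With that observation made explicit, your argument is complete and strictly tighter than the one in the paper.
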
 

\begin{proof}
Since only one transition probability \( p_{ij} \) is modified, the global structure of the multivector field remains unchanged except in one location:

\begin{itemize}
    \item The modification of \( p_{ij} \) affects only the relationship between states \( N_i \) and \( N_j \).
    \item The change in the multivector field may result in:
    \begin{itemize}
        \item A Morse set being born earlier or later.
        \item A Morse set disappearing earlier or later.
        \item A merging of multiple Morse sets into a single one.
    \end{itemize}
    \item However, in all cases, each affected Morse set undergoes a shift in birth or death time by at most \( |\delta| \).
\end{itemize}

Thus, for any affected Morse set \( M \), we have:

\[
|\gamma_b(M) - \gamma_b(M')| \leq |\delta|, \quad |\gamma_d(M) - \gamma_d(M')| \leq |\delta|.
\]

Now, we define a bijection \( \varphi \) between Morse sets in \( D(P) \) and \( D(P') \):

\begin{itemize}
    \item For all unchanged Morse sets, we match them with their identical counterparts.
    \item For affected Morse sets, we match each of them to the corresponding Morse set in \( D(P') \), whether it remains the same or has merged into a new Morse set.
\end{itemize}

Since the maximum shift in birth or death time is at most \( |\delta| \), we obtain:

\[
\|\mathbf{p}_i - \mathbf{p}'_{\varphi(i)}\|_{\infty} = \max \left( |\gamma_b(M) - \gamma_b(M')|, |\gamma_d(M) - \gamma_d(M')| \right) \leq |\delta|.
\]

By the definition of the bottleneck distance, which takes the infimum over all bijections, we conclude:

\[
d_B(D(P), D(P')) \leq \sup_{M} \max \left( |\gamma_b(M) - \gamma_b(M')|, |\gamma_d(M) - \gamma_d(M')| \right) \leq |\delta|.
\] and
\[d_B(D(P),D(P'))<\epsilon\]

\end{proof}

\subsubsection{Proof of the sability theorem}

The proof follows by decomposing the perturbation into single-entry modifications in the transition matrix and applying the local stability lemma iteratively:

\begin{itemize}
    \item \textbf{Step 1: Decomposition of Perturbation}  
    We write the transition matrix perturbation as a sequence of stepwise changes:
    \[
    P = P_0 \to P_1 \to P_2 \to \dots \to P_l = P',
    \]
    where each \( P_k \) differs from \( P_{k-1} \) in at most one transition probability.

    \item \textbf{Step 2: Application of the Local Stability Lemma}  
    From the local stability lemma, we know that if \( P \) and \( P' \) differ by a single entry change \( \delta \), then:
    \[
    d_B(D(P_k), D(P_{k+1})) \leq |\delta_k|.
    \]
    Summing over all steps, we get:
    \[
    d_B(D(P), D(P')) \leq \sum_{k=0}^{l-1} d_B(D(P_k), D(P_{k+1})) \leq l\times \max_k(\delta_k).
    \]
Since $\|P-P'\|_\infty<\delta$, we have:
\[d_B(D(P),D(P'))<l\delta
\]
$l$ being the number of non-zero entries of the matrix $P-P'$, with $l\le n^2$

\qed

\end{itemize}
\section{Example of computation}
Here we will show how the compuation of the persistence diagram for this setting looks like. We will continue with the Example we used before(Example 1).\\

We consider a Markov process with three states \( S = \{N_1, N_2, N_3\} \) and the transition matrix:

\[
P =
\begin{bmatrix}
0.5 & 0.17 & 0.33 \\
0.17 & 0.6 & 0.23 \\
0.15 & 0.15 & 0.7
\end{bmatrix}.
\]
Here the different values of gamma we are interested in are:
$\{0.15, 0.17, 0.23, 0.33\}$.

Note that we are not interested by value at the diagonal since while creating the M-graph, we always have a self loop for each multivector. 
\subsection{Multivector field construction}
As we have already shown how we construct for $\gamma =0.2$ before, I am only going to give the final construction for each $\gamma$.
\subsubsection{For$\gamma<0.15, ~~\gamma_0$}
We have: \[\cV=\{\{N_1\}, ~\{N_2\},~ \{N_3\},~\{(N_1,N_2)\},~\{(N_1,N_3)\},~\{(N_2,N_3)\}\]
Here we only have singletons since we did not cross any of the transition probabilities in the transition matrix yet

\subsubsection{For $\gamma=0.15, ~~\gamma_1$}
We have: \[\cV=\{\{N_1\}, ~\{N_2\},~ \{N_3,(N_1,N_3),(N_2,N_3)\},~\{(N_1,N_2)\}\]
\subsubsection{For $\gamma=0.17, ~~\gamma_2$}
We have: \[\cV=\{\{N_1,N_2,(N_1,N_2)\},~ \{N_3,(N_1,N_3),(N_2,N_3)\}\}\]

\subsubsection{For $\gamma=0.23, ~~\gamma_3$}
We have: \[\cV=\{\{N_1,N_2,(N_1,N_2),N_3,(N_1,N_3),(N_2,N_3)\}\}\] 

For $\gamma\ge 0.23$ which is the greastest element not in the diagonal, our multivector field will only always consist in one big multivector containing the whole space.

\subsection{Morse set computations}
Here we will also compute Morse set from the multivector field for each value of $\gamma$ and their index, which is just computing each time strongly connected component of the M-graph.
\subsubsection{For $\gamma<0.15$ }
Since we said that in the M-graph each multivector have self loop, they are then also strongly connected and then we have as Morse set for this value of $\gamma$:
\begin{itemize}
    \item \[M^0_1,~M^0_2,~M^0_3=\{N_1\},~\{N_2\},~\{N_3\}\] all of them have the same index so:
    \[k^0_1=k^0_2=k^0_3=(0,0)\], since the dimensions of the first homology and relative homology of a point are 0.
    \item \[M^0_4,~M^0_5,~M^0_6=\{(N_1,N_2)\},~\{(N_1,N_3)\},~\{(N_2,N_3)\}\] all of them have the same index so:
    \[k^0_4=k^0_5=k^0_6=(0,1)\], since the dimensions of the first homology of an edge is zero, but the first relative homology of an edge to its two vertices is actually the same as the homology of a circle.
\end{itemize}

\subsubsection{For $\gamma=0.15$ }
\begin{itemize}
    \item $M^1_1,~M^1_2=\{N_1\},~ \{N_2\}$ and $k^1_1=k^1_2=(0,0)$
    \item $M^1_3, M^1_4=\{(N_1,N_2)\},~ \{N_3,(N_1,N_3),(N_2,N_3)\}$ and $k^1_3=k^1_4=(0,1)$
 
\end{itemize}
\subsubsection{For $\gamma=0.17$ }
\begin{itemize}
    \item $M^2_1=\{N_1,N_2,(N_1,N_2)\}$ and $k^2_1=(0,0)$
    \item $M^2_2=\{N_3,(N_1,N_3),(N_2,N_3)\}$ and $k^2_2=(0,1)$
\end{itemize}
\subsubsection{For $\gamma=0.23$ }
\begin{itemize}
    \item $M^3_1=\{N_1,N_2,(N_1,N_2),N_3,(N_1,N_3),(N_2,N_3)\}$ and $k^3_1=(1,1)$
\end{itemize}
Note that the Morse set remains unchanged for greater values of $\gamma$. In general Morse sets are union of multivectors, here we just have a special case where Morse sets correspond to multivectors for the sake of simplicity.
\subsection{Persistence diagram computation}
From the computations of Morse sets we have:
\begin{itemize}
    \item $M^0_1\subset M^1_1\subset M^2_1\subset M^3_1$ \\$M^0_1$ dies at $\gamma_3$ because of the change in index.
    \item $M^0_2\subset M^1_2\subset M^2_1\subset M^3_1$ $M^0_2$ dies at $\gamma_2$ because it merges with $M^0_1$
    \item $M^0_3\subset M^1_4\subset M^2_2\subset M^3_1$\\
    $M^0_3$ dies at $\gamma_1$ because there is a change of index.
    \item $M^0_4\subset M^1_2\subset M^2_1\subset M^3_1$; $M^0_4$ dies at $\gamma_2$ because it merges with $M^0_1$

    \item $M^0_5\subset M^1_4\subset M^2_2\subset M^3_1$\\
    $M^0_5$ dies at $\gamma_3$ because there is a change of index.
    \item $M^0_6\subset M^1_4\subset M^2_2\subset M^3_1$\\
    $M^0_6$ dies at $\gamma_1$ because it merges with $M^0_5$.
    \item $M^3_1$ is born at $\gamma_3$ and never dies.

\end{itemize}
The persistence  diagram for the transition Matrix $P$ will have the points:
$p_1=(0,0.23,(0,0)),~ p_2=(0,0.17,(0,0)),~ p_3=(0,0.15,(0,0)),~p_4=(0,0.17,(0,1)),~p_5=(0,0.23,(0,1)),~p_6=(0,0.15,(0,1)),p_7=(0.23,\infty,(1,1))$
\begin{figure}
    \centering\includegraphics[height=10cm, width=10cm, scale=1.00, angle=0 ]{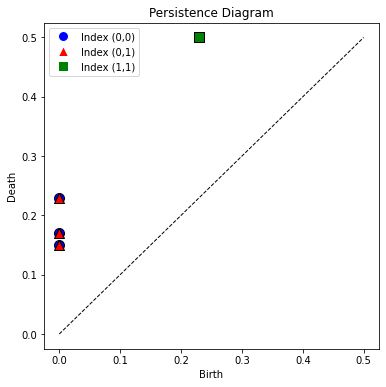}
    \caption{Persistence diagram of the Morse sets for $P$}
    \label{fig:enter-label}
\end{figure}
\section*{Competing interests}
   Research of D.W. is partially supported by the Polish National Science Center under Opus Grant No. 2019/35/B/ST1/00874.\\
    
\newpage
\bibliographystyle{plain}
\bibliography{references}

\begin{thebibliography}{10}

\bibitem{Alexandroff_ftop}
P.~Alexandroff.
\newblock Diskrete {R{\"a}ume}.
\newblock {\em Rec. Math. Moscou, n. Ser.}, 2:501--519, 1937.

\bibitem{ben2024representation}
Iddo Ben-Ari and Ningwei Jiang.
\newblock Representation and characterization of quasistationary distributions for markov chains.
\newblock {\em arXiv preprint arXiv:2402.11154}, 2024.

\bibitem{cohen2005stability}
David Cohen-Steiner, Herbert Edelsbrunner, and John Harer.
\newblock Stability of persistence diagrams.
\newblock In {\em Proceedings of the twenty-first annual symposium on Computational geometry}, pages 263--271, 2005.

\bibitem{cote2025data}
Dominic~Desjardins C{\^o}t{\'e} and Donald Woukeng.
\newblock From data to combinatorial multivector field through an optimization-based framework.
\newblock {\em arXiv preprint arXiv:2501.02023}, 2025.

\bibitem{dey2019persistent}
Tamal Dey, Mateusz Juda, Tomasz Kapela, Jacek Kubica, Michał Lipiński, and Marian Mrozek.
\newblock Persistent homology of {M}orse decompositions in combinatorial dynamics.
\newblock {\em SIAM Journal on Applied Dynamical Systems}, 18:510--530, January 2018.

\bibitem{dey2022tracking}
Tamal~K Dey, Micha{\l} Lipi{\'n}ski, Marian Mrozek, and Ryan Slechta.
\newblock Tracking dynamical features via continuation and persistence.
\newblock {\em arXiv preprint arXiv:2203.05727}, 2022.

\bibitem{dey2020persistence}
Tamal~K Dey, Marian Mrozek, and Ryan Slechta.
\newblock Persistence of the conley index in combinatorial dynamical systems.
\newblock {\em arXiv preprint arXiv:2003.05579}, 2020.

\bibitem{dey2022persistence}
Tamal~K Dey, Marian Mrozek, and Ryan Slechta.
\newblock Persistence of conley--morse graphs in combinatorial dynamical systems.
\newblock {\em SIAM Journal on Applied Dynamical Systems}, 21(2):817--839, 2022.

\bibitem{donato2013decimation}
Irene Donato, Giovanni Petri, Martina Scolamiero, Lamberto Rondoni, and Francesco Vaccarino.
\newblock Decimation of fast states and weak nodes: topological variation via persistent homology.
\newblock In {\em Proceedings of the European Conference on Complex Systems 2012}, pages 295--301. Springer, 2013.

\bibitem{En1989}
R.~Engelking.
\newblock {\em General Topology}.
\newblock Heldermann Verlag, Berlin, 1989.

\bibitem{fernandez2025quasi}
Roberto Fernandez, Francesco Manzo, Matteo Quattropani, and Elisabetta Scoppola.
\newblock Quasi-stationary distributions of non-absorbing markov chains.
\newblock {\em Journal of Statistical Physics}, 192(3):43, 2025.

\bibitem{han2024classification}
Lixing Han and Jianhong Xu.
\newblock On classification of states in higher order markov chains.
\newblock {\em Linear Algebra and its Applications}, 685:24--45, 2024.

\bibitem{le2022persistent}
Minh~Quang Le and Dane Taylor.
\newblock Persistent homology of convection cycles in network flows.
\newblock {\em Physical Review E}, 105(4):044311, 2022.

\bibitem{lipinski_phd}
Micha\l{} Lipi\'nski.
\newblock {\em Morse-{C}onley-{F}orman theory for generalized combinatorial multivector fields on finite topological spaces}.
\newblock PhD thesis, Jagiellonian University, 2021.

\bibitem{lipinski2019conley}
Micha{\l} Lipi{\'n}ski, Jacek Kubica, Marian Mrozek, and Thomas Wanner.
\newblock Conley-{M}orse-{F}orman theory for generalized combinatorial multivector fields on finite topological spaces.
\newblock {\em Journal of Applied and Computational Topology}, 2022.

\bibitem{tolver2016introduction}
Anders Tolver.
\newblock An introduction to markov chains.
\newblock {\em Department of Mathematical Sciences, University of Copenhagen}, 2016.

\bibitem{tymochko2021classifying}
Sarah Tymochko, Kritika Singhal, and Giseon Heo.
\newblock Classifying sleep states using persistent homology and markov chains: a pilot study.
\newblock {\em Advances in Data Science}, pages 253--289, 2021.

\bibitem{Woukeng_2024}
Donald Woukeng, Damian Sadowski, Jakub Leśkiewicz, Michał Lipiński, and Tomasz Kapela.
\newblock Rigorous computation in dynamics based on topological methods for multivector fields.
\newblock {\em Journal of Applied and Computational Topology}, 8(4):875–908, 2024.

\end{thebibliography}

\end{document}